\newtheorem{theorem}{Theorem}[section]
\newtheorem{lemma}{Lemma}[section]
\newtheorem{proposition}{Proposition}[section]
\newtheorem{corollary}{Corollary}[section]
\newtheorem{definition}{Definition}[section]
\newtheorem{remark}{Remark}
\begin{document}
\begin{spacing}{1.1}
\begin{frontmatter}

\title{Kadec-Klee property for convergence in measure of noncommutative Orlicz spaces\tnoteref{mytitlenote}}
\tnotetext[mytitlenote]{The research has been supported by National Science Foundation of China (Grant No.10971011 ).}


\author[mymainaddress,mysecondaryaddress]{Ma Zhenhua}

\ead{mazhenghua\_1981@163.com}

\author[mysecondaryaddress]{Jiang Lining \corref{Jiang Lining}}
\cortext[Jiang Lining]{Corresponding author}

\ead{jianglining@bit.edu.cn}

\author[mymainaddress]{Ji Kai}
\ead{jikai68@163.com}

\address[mymainaddress]{Hebei University of Architecture, Zhangjiakou, 075024, P. R. China}
\address[mysecondaryaddress]{Beijing
Institute of Technology, Beijing, 100081, P. R. China}

\begin{abstract}
In this paper, we study the Kadec-Klee property for convergence in measure of noncommutative Orlicz spaces $L_{\varphi}(\widetilde{\mathcal{M}},\tau)$, where $\widetilde{\mathcal{M}}$ is a von Neumann algebra, and $\varphi$ is an Orlicz function. We show that if $\varphi\in\Delta_{2}$,  $L_{\varphi}(\widetilde{\mathcal{M}},\tau)$ has the Kadec-Klee property in measure. As a corollary, the dual space and reflexivity of $L_{\varphi}(\widetilde{\mathcal{M}},\tau)$ are given.
\end{abstract}

\begin{keyword}
Noncommutative Orlicz spaces\sep $\tau$-measurable operator\sep von Neumann algebra\sep Orlicz function\sep Kadec-Klee property in measure
\MSC[2010] 46L52\sep 47L10\sep46A80
\end{keyword}

\end{frontmatter}

\linenumbers
\section{Preliminaries}
As is well known,  the Kadec-Klee property was firstly studied by J. Radon \cite{Radon}. This property said that if $(E,\|\cdot\|_{E})$ is a normed linear space, then $E$ is said to have the Kadec-Klee property (sometimes called the Radon-Riesz property, or property (H)) if and only if sequential weak convergence on the unit sphere coincides with norm convergence. For example, in \cite{Reisz1} and \cite{Reisz2}, F. Riesz  showed that the classical $L_{p}$-spaces, $1<p<\infty$ have the Kadec-Klee property.


In this paper, we study the  Kadec-Klee property for convergence in measure of noncommutative Orlicz spaces $L_{\varphi}(\widetilde{\mathcal{M}},\tau)$  \cite{Chilin}. Namely, if for any $x\in L_{\varphi}(\widetilde{\mathcal{M}},\tau)$ and any sequence $(x_{n})$ in $L_{\varphi}(\widetilde{\mathcal{M}},\tau)$ such that $\|x_{n}\|\rightarrow \|x\|$ and $x_{n}\rightarrow x$ in measure, we have $\|x_{n} -x\|\rightarrow 0$ \cite{AM,Chilin}.

If $E$ is a Banach space, we define a order $``\leq" $ on $E$, then the Banach space $E$ is said to be the order continuous if for any element $x\in E$ and any sequence $(x_{n})$ in $E_{+}$ (the positive cone in $E$) with $0\leq x_{n}\leq |x|$ and $x_{n}\rightarrow0\,\,m$-a.e., there holds $\|x_{n}\|\rightarrow0$. We note that the norm $\|\cdot\|_{E}$ on the symmetric space $E$ is order continuous if and only if $E$ is separable.

As usual, $E$ is said to be lower locally uniformly monotone ($E\in (LLUM)$ for short), whenever for any $x\in E_{+}$ with $\|x\|_{E}=1$ and any $\varepsilon\in(0,1)$ there is $\delta=\delta(x,\varepsilon)\in(0,1)$ such that the conditions $0\leq y\leq x$ and $\|y\|_{E}\geq \varepsilon$ imply $\|x-y\|\leq 1-\delta$. $E$ is said to be upper locally uniformly monotone ($E\in (ULUM)$ for short), whenever for any $x\in E_{+}$ with $\|x\|_{E}=1$ and any $\varepsilon>0$ there is $\delta=\delta(x,\varepsilon)>0$ such that the conditions $ y\geq 0$ and $\|y\|_{E}\geq \varepsilon$ imply $\|x+y\|\geq 1+\delta$ \cite{Hudzik}.

It is useful to formulate the local uniform monotonicity properties sequentially. Clearly, $E\in (LLUM)$ (resp. $E\in (ULUM)$) if and only if for any $x\in E_{+}, \,\,x\neq0$, and each sequence $(x_{n})$ in $E_{+}$ such that $x_{n}\leq x$ (resp. $x\leq x_{n}$) and $\|x_{n}\|_{E}\rightarrow\|x\|_{E}$, there holds $\|x_{n}-x\|_{E}\rightarrow0$.

Now, we collect some of the basic facts and notation that will be used in this paper. Noncommutative integration theory was first introduced by Irving Segal \cite{Segal}, and is a fundamental tool in many theories, such as operator theory and statistical model \cite{LW}. In this paper we study some aspects of the theory of noncommutative Orlicz spaces, that is, spaces of measurable operators associated to a noncommutative Orlicz functional. The theory of Orlicz spaces associated to a trace was introduced by Muratov \cite{Muratov} and Kunze \cite{Kunze} and were respectively defined by Kunze \cite{Kunze} and Rashed et al \cite{Rashed} in an algebraic way and by Sadeghi \cite{Ghadir} employing modular spaces. In this paper we take Sadeghi's approach and continue this line of investigation.

From now on, by $\mathcal{M}$ we denote a semi-finite von Neumann algebra acting on a Hilbert space $\mathcal{H}$ with a normal semi-finite faithful trace $\tau.$ The identity in $\mathcal{M}$ is denoted by $\mathbf{1}$ and we denote by $\mathcal{P}(\mathcal{M})$ the complete lattice of all self-adjoint projections in $\mathcal{M}$.
A densely-defined closed linear operator $x: \mathcal{D}(x)\rightarrow \mathcal{H}$ with domain $\mathcal{D}(x)\subseteq\mathcal{H}$ is called affiliated with $\mathcal{M}$ if and only if $u^{\ast}xu=x$ for all unitary operators $u$ belonging to the commutant $\mathcal{M^{\prime}}$ of $\mathcal{M}$. Clearly, if $x\in \mathcal{M}$ then $x$ is affiliated with $\mathcal{M}$.
If $x$ be a (densely-defined closed) operator affiliated with $\mathcal{M}$ and $x=u|x|$ be the polar decomposition, where $|x|=(x^{\ast}x)^{\frac{1}{2}}=\int^{\infty}_{0}\lambda de_{\lambda}(|x|)$ be the spectral decomposition and $u$ is a partial isometry, then $x$ said to be $\tau$-measurable if and only if there exists a number $\lambda\geq0$ such that $\tau(e_{(\lambda,\infty)}(|x|))<\infty$, where $e_{[0,\lambda]}$ is the spectral resolution of $|x|$. The collection of all $\tau$-measurable operators is denoted by $\widetilde{\mathcal{M}}$.
We say that $\{x_{n}\}$ converges to $x$ in measure topology $(x_{n}\xrightarrow{\tau_{m}}x$ for short $)$, if $\lim_{n\rightarrow\infty}\tau(e_{(\varepsilon,\infty)}(|x_{n}-x|))=0$ for any $\varepsilon>0$ \cite{Nelson}.

In the setting of $\tau$-measurable operators, the generalized singular value functions are the analogue (and actually, generalization) of the decreasing rearrangements of functions in the classical settings. In details, for $x\in\widetilde{\mathcal{M}}$, the generalized singular value function $\mu(x): [0,\infty]\rightarrow[0,\infty]$ is defined by
$$\mu_{t}(x)=\inf\{s\geq0: \tau\left(e_{(s,\infty)}(|x|)\right)\leq t \}, \,\, t>0.$$
It is well known that the $\mu_{(\cdot)}(x)$ is a decreasing right-continuous function on the positive half-line $[0,\infty)$ \cite{Fack}.

If $x\in\widetilde{\mathcal{M}}$ and $x\geq0$, then
$$\tau(x)=\int^{\infty}_{0}\mu_{t}(x)dt$$
 and for a continuous function $\varphi$ on $[0,\infty)$ with $\varphi(0)=0$, we have \cite{Fack}
$$\tau(\varphi(|x|))=\int^{\infty}_{0}\varphi(\mu_{t}(x))dt.$$

Next we recall the definition and  some basic properties of noncommutative Orlicz spaces.

A convex function $\varphi: [0,\infty)\rightarrow[0,\infty]$ is called an Orlicz function
if it is nondecreasing and continuous for $\alpha>0$ and such that $\varphi(0)=0,\,\varphi(\alpha)>0$ and $\varphi(\alpha)\rightarrow\infty$ as $\alpha\rightarrow\infty$ \cite{Chen}. Further we say an Orlicz function $\varphi$ satisfies the $\Delta_{2}$-condition, shortly $\varphi\in\Delta_{2}$, if there exists a constant $k>0$ such that $\varphi(2u)\leq k\varphi(u)$ for all $u>0$.
Generally speaking, $\Delta_{2}$-condition plays a very important role in the theory of either classic Orlicz spaces \cite{Chen} or noncommutative classic Orlicz \cite{Ghadir,Zhenhua}. For the background of Orlicz functions and Orlicz spaces one can see \cite{Rao,Chen}.

Suppose $x\in\widetilde{\mathcal{M}}$ and $\varphi$ is an Orlicz function, if we denote $\widetilde{\rho}_{\varphi}(x)=\tau(\varphi(|x|))$, then $\tau(\varphi(|x|))$ is a convex modular \cite{Ghadir}, hence we can define a corresponding modular space which is named noncommutative Orlicz space as follows:
$$L_{\varphi}(\widetilde{\mathcal{M}},\tau)=\{x\in\widetilde{\mathcal{M}}:  \tau(\varphi(\lambda|x|))<\infty \,\, for\,\,some\,\,\lambda>0\}.$$
We equip this space with the Luxemburg norm
$$\|x\|=\inf\{\lambda>0: \tau\Big(\varphi\Big(\frac{|x|}{\lambda}\Big)\Big)\leq1\}.$$

In the case when $\varphi(x)=|x|^{p},\,\,1\leq p<\infty$ for any $\tau$-measurable operator $x\in{\widetilde{\mathcal{M}}}$, then $\varphi\in\Delta_{2}$ and $L_{\varphi}(\widetilde{\mathcal{M}},\tau)$ is nothing but the noncommutative space
$L_{p}(\widetilde{\mathcal{M}},\tau)
=\left\{x\in\widetilde{\mathcal{M}}: \tau\left(|x|^{p}\right)<\infty\right\}$ \cite{Zhenhua} and the Luxemburg norm generated by this function is expressed by the formula
$$\|x\|_{p}=\big(\tau(|x|^{p})\big)^{\frac{1}{p}}.$$

One can define another norm on  $L_{\varphi}(\widetilde{\mathcal{M}},\tau)$ as follows
$$\|x\|^{o}=\sup\{\tau(|xy|): y\in  L_{\psi}(\widetilde{\mathcal{M}},\tau)\,\, and\,\, \tau(\psi(y))\leq1 \},$$
where $\psi: [0,\infty)\rightarrow [0,\infty] $ defined by $\psi(u)=\sup\{uv-\varphi(v): v\geq 0\}$. Here we call $\psi$ the complementary function of $\varphi$.

For more information on the theory of noncommutative Orlicz spaces we refer the reader to \cite{Muratov,Rashed,Rashed1,Ghadir,Kunze,Zhenhua}.
\section{Main results}
In this section, we firstly prove that $L_{\varphi}(\widetilde{\mathcal{M}},\tau)$ have Kadec-Klee property for convergence in measure implies $\varphi\in\Delta_{2}$. And, we find that  $\varphi\in\Delta_{2}$ is necessary of this property. As  a corollary of the Theorem 2.2, $L_{\varphi}(\widetilde{\mathcal{M}},\tau)$ is order continuous, hence the K\"{o}the dual is identified the Banach dual.

\begin{theorem}
If $L_{\varphi}(\widetilde{\mathcal{M}},\tau)$ has Kadec-Klee property for convergence in measure, then $\varphi\in\Delta_{2}$.
\end{theorem}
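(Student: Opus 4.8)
The plan is to argue by contraposition: assuming $\varphi\notin\Delta_{2}$, I will exhibit an element $x\in L_{\varphi}(\widetilde{\mathcal{M}},\tau)$ with $\|x\|=1$ and a sequence $(x_{N})$ in $L_{\varphi}(\widetilde{\mathcal{M}},\tau)$ with $x_{N}\xrightarrow{\tau_{m}}x$ and $\|x_{N}\|\to\|x\|$ but $\|x_{N}-x\|=1$ for every $N$, so that the Kadec-Klee property in measure fails. The first ingredient is purely scalar: since $\varphi\notin\Delta_{2}$, for each $n\geq1$ there is a $u_{n}>0$ with $\varphi\big((1+\tfrac{1}{n})u_{n}\big)>2^{n}\varphi(u_{n})>0$. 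Indeed, if for some fixed $n$ one had $\varphi\big((1+\tfrac{1}{n})u\big)\leq2^{n}\varphi(u)$ for all $u>0$, iteration would give $\varphi\big((1+\tfrac{1}{n})^{m}u\big)\leq2^{nm}\varphi(u)$, and choosing $m$ with $(1+\tfrac{1}{n})^{m}\geq2$ would yield $\varphi(2u)\leq2^{nm}\varphi(u)$ for all $u$, contradicting $\varphi\notin\Delta_{2}$. I then set $b_{n}:=2^{-n}/\varphi(u_{n})>0$.

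The second step, which I expect to be the only genuine difficulty, is to realize these scalars inside $\widetilde{\mathcal{M}}$. Using the semifiniteness of $\tau$ (and, if necessary, rescaling all the $b_{n}$ by a single small positive factor, which affects none of the claims below), I choose pairwise orthogonal nonzero projections $p_{n}\in\mathcal{P}(\mathcal{M})$ with $\tau(p_{n})=b_{n}$ and put $x:=\sum_{n}u_{n}p_{n}\geq0$. Then $x$ is $\tau$-measurable, because $e_{(s,\infty)}(|x|)=\sum_{u_{n}>s}p_{n}$ has trace $\sum_{u_{n}>s}b_{n}\leq\varphi(s)^{-1}\sum_{n}b_{n}\varphi(u_{n})=\varphi(s)^{-1}<\infty$ for every $s>0$; moreover $\widetilde{\rho}_{\varphi}(x)=\tau(\varphi(x))=\sum_{n}\varphi(u_{n})\tau(p_{n})=\sum_{n}2^{-n}=1$, so $x\in L_{\varphi}(\widetilde{\mathcal{M}},\tau)$ and $\|x\|\leq1$; and for every $\lambda>1$, choosing $n_{0}$ with $1+\tfrac{1}{n_{0}}\leq\lambda$ gives $b_{n}\varphi(\lambda u_{n})\geq b_{n}\varphi\big((1+\tfrac{1}{n})u_{n}\big)>b_{n}\cdot2^{n}\varphi(u_{n})=1$ for all $n\geq n_{0}$, hence $\widetilde{\rho}_{\varphi}(\lambda x)=\sum_{n}b_{n}\varphi(\lambda u_{n})=\infty$ and therefore $\|x\|\geq1$, so $\|x\|=1$. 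Everything after this realization of the classical non-$\Delta_{2}$ pathology as a $\tau$-measurable operator with this exact normalization is bookkeeping with the modular $\widetilde{\rho}_{\varphi}$ and the Luxemburg norm.

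Finally I would take $x_{N}:=\sum_{n\leq N}u_{n}p_{n}$ and $w_{N}:=x-x_{N}=\sum_{n>N}u_{n}p_{n}$ and verify the three convergences. For $x_{N}\xrightarrow{\tau_{m}}x$: given $\varepsilon>0$, the projections $e_{(\varepsilon,\infty)}(|x-x_{N}|)=\sum_{n>N,\,u_{n}>\varepsilon}p_{n}$ decrease in $N$ to $0$ while staying below the fixed projection $e_{(\varepsilon,\infty)}(|x|)$ of finite trace, so normality of $\tau$ gives $\tau\big(e_{(\varepsilon,\infty)}(|x-x_{N}|)\big)\to0$. For $\|x_{N}-x\|=1$: from $0\leq w_{N}\leq x$ we get $\|w_{N}\|\leq\|x\|=1$, while for $0<\mu<1$ the series $\widetilde{\rho}_{\varphi}(w_{N}/\mu)=\sum_{n>N}b_{n}\varphi(u_{n}/\mu)$ is a tail of the divergent series $\widetilde{\rho}_{\varphi}(x/\mu)=\sum_{n}b_{n}\varphi(u_{n}/\mu)=\infty$, so $\widetilde{\rho}_{\varphi}(w_{N}/\mu)=\infty$ and $\|w_{N}\|\geq1$. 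For $\|x_{N}\|\to\|x\|=1$: always $\|x_{N}\|\leq1$, and if $\|x_{N_{j}}\|\leq1-\delta$ held along a subsequence for some $\delta>0$, then $\sum_{n\leq N_{j}}b_{n}\varphi\big(u_{n}/(1-\delta)\big)=\widetilde{\rho}_{\varphi}\big(x_{N_{j}}/(1-\delta)\big)\leq1$, yet the left-hand side increases with $N_{j}$ to $\widetilde{\rho}_{\varphi}\big(x/(1-\delta)\big)=\infty$, a contradiction. Thus $x_{N}\xrightarrow{\tau_{m}}x$ and $\|x_{N}\|\to\|x\|$ while $\|x_{N}-x\|\not\to0$, so $L_{\varphi}(\widetilde{\mathcal{M}},\tau)$ fails the Kadec-Klee property in measure; equivalently, if it has that property then $\varphi\in\Delta_{2}$.
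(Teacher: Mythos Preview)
Your proof is correct and follows essentially the same strategy as the paper: both build $x=\sum_{n}u_{n}p_{n}$ from scalars $u_{n}$ satisfying $\varphi\big((1+\tfrac{1}{n})u_{n}\big)>2^{n}\varphi(u_{n})$ and pairwise orthogonal projections with $\varphi(u_{n})\tau(p_{n})=2^{-n}$, so that $\widetilde{\rho}_{\varphi}(x)=1$ while $\widetilde{\rho}_{\varphi}(\lambda x)=\infty$ for every $\lambda>1$. The only difference is the approximating sequence---the paper removes a single summand ($x_{n}=x-u_{n}p_{n}$) and bounds $\|u_{n}p_{n}\|$ from below via an explicit estimate of $\|p_{n}\|$, whereas you use partial sums and exploit tail divergence of the modular---but the underlying idea is identical, and your version supplies details (notably the justification that $\|x_{N}\|\to1$) that the paper merely asserts.
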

\begin{proof}
Suppose $\varphi\notin\Delta_{2}$, we choose $\{u_{k}\}^{\infty}_{k=1}\in \widetilde{\mathcal{M}}$ and select mutually orthogonal projections $\{e_{k}\}^{\infty}_{k=1}\in\mathcal{P}(\mathcal{M}) $  in $\mathcal{M}$ with $\tau(e_{n})\rightarrow 0$ such that
$$\varphi\left(\left(1+\frac{1}{k}u_{k}\right)\right)>2^{k}\varphi(u_{k})$$ and $$\varphi(u_{k})\tau(e_{k})=\frac{1}{2^{k}},$$
where $k\in\mathbb{N}$.

Define $x=\sum\limits_{k=1}^{\infty}u_{k}e_{k}$ and $x_{n}=\sum_{k=1}^{\infty}u_{k}e_{k}-u_{n}e_{n}$
then $\|x_{n}\|\rightarrow\|x\|$ and for any $s>0$, by Lemma 2.6 of \cite{Fack} and $\tau(e_{n})\rightarrow 0$,
\begin{eqnarray*}
\tau(e_{(s, \infty)}(|x_{n}-x|))&=&\tau(e_{(s, \infty)}(|2u_{n}e_{n}|))\\
&=&\int_{0}^{\infty}\chi_{(s, \infty)}(\mu_{t}(|2u_{n}e_{n}|))dt\\
&\rightarrow&0
\end{eqnarray*}
Hence, $x_{n}\xrightarrow{\tau_{m}}x$.

On the other side, by (ii) of Proposition 3.4 in \cite{Ghadir} and  Remark 3.3 in \cite{Fack},
\begin{eqnarray*}
1\geq\tau\left(\varphi\left(\frac{e_{k}}{\|e_{k}\|}\right)\right)&=&\int_{0}^{\infty}\varphi\left(\mu_{t}\left(\frac{e_{k}}{\|e_{k}\|}\right)\right)dt\\
&=&\varphi\left(\frac{1}{\|e_{k}\|}\right)\tau(e_{k})\\
&=&\varphi\left(\frac{1}{\|e_{k}\|}\right)\frac{1}{2^{k}\varphi(u_{k})}\\
&>&\varphi\left(\frac{1}{\|e_{k}\|}\right)\frac{1}{\varphi\left(\left(1+\frac{1}{k}\right)u_{k}\right)}
\end{eqnarray*}
since $\mu_{t}(e_{k})=\chi_{[0, \tau(e_{k}))}(t)$ for any $k\in N$.

Then we have
$\|e_{k}\|>\left[\left(1+\frac{1}{k}\right)u_{k}\right]^{-1}$ and
$$\|x-x_{n}\|=\|2u_{n}e_{n}\|>2\left(1+\frac{1}{n}\right)^{-1}$$
which completes the proof.
\end{proof}
In order to prove that $L_{\varphi}(\widetilde{\mathcal{M}},\tau)$ has the Kadec-Klee property for convergence in measure, we need the following two Lemmas.
\begin{lemma}
If $\varphi\in\Delta_{2}$,  then for any sequence $\{x_{n}\}$ in $L_{\varphi}(\widetilde{\mathcal{M}},\tau)$, we have $\|x_{n}\|\rightarrow \|x\|$ if and only if $\tau(\varphi(x_{n}))\rightarrow \tau(\varphi(x))$.
\end{lemma}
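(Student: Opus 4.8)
The plan is to exploit the standard modular–norm relationship that holds under the $\Delta_{2}$-condition, transported from the commutative theory via the generalized singular value function. Recall that for $x\in L_{\varphi}(\widetilde{\mathcal{M}},\tau)$ one has $\tau(\varphi(|x|))=\int_{0}^{\infty}\varphi(\mu_{t}(x))\,dt=\rho_{\varphi}(\mu(x))$, where $\rho_{\varphi}$ is the classical Orlicz modular on $L^{0}(0,\infty)$; moreover $\|x\|=\|\mu(x)\|_{\varphi}$, the Luxemburg norm of the rearrangement. Thus both hypotheses of the lemma are statements purely about the functions $\mu(x_{n})$ and $\mu(x)$ in the commutative Orlicz space over $(0,\infty)$, and the implications we must prove are exactly the known commutative facts: under $\varphi\in\Delta_{2}$, norm convergence is equivalent to modular convergence. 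So the first step is to reduce to the commutative setting by passing to singular value functions, after which one may either cite the classical result (e.g. from Chen's monograph) or reprove the two implications directly.

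For the forward direction, suppose $\|x_{n}\|\to\|x\|$. The easy sub-case $\|x\|=0$ forces $x=0$ and $\tau(\varphi(x_{n}))\to 0$ by $\Delta_{2}$ (small norm implies small modular); assume then $\|x\|>0$. Fix $\varepsilon>0$. Using the definition of the Luxemburg norm, $\tau\big(\varphi(|x_{n}|/\|x_{n}\|)\big)\le 1$, and since $\|x_{n}\|\to\|x\|$ we can write $|x_{n}| = \|x_{n}\|\cdot(|x_{n}|/\|x_{n}\|)$ with $\|x_{n}\|\le(1+\varepsilon)\|x\|$ eventually; the $\Delta_{2}$-condition (iterated finitely many times to absorb the factor $(1+\varepsilon)$) gives a constant $K=K(\varepsilon)$ with $\tau(\varphi(|x_{n}|))\le K$, and letting $\varepsilon\downarrow 0$ together with continuity of $\varphi$ shows $\limsup_n \tau(\varphi(x_{n}))\le \tau(\varphi(x))$. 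The matching liminf inequality comes the same way using $\|x_{n}\|\ge(1-\varepsilon)\|x\|$. (Here it is cleanest to phrase the $\Delta_{2}$ absorption through the growth estimate $\varphi(\lambda u)\le \lambda^{p}\varphi(u)$ for $\lambda\ge 1$, valid for some $p$ when $\varphi\in\Delta_{2}$, applied pointwise to $\mu_{t}(x_{n})$ and integrated.)

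For the reverse direction, suppose $\tau(\varphi(x_{n}))\to\tau(\varphi(x))$; we want $\|x_{n}\|\to\|x\|$. If $\tau(\varphi(x))=0$ then $x=0$ and $\tau(\varphi(x_{n}))\to0$, and under $\Delta_{2}$ the modular and the norm are ``equivalent near $0$'' in the sense that $\rho_{\varphi}$-convergence to $0$ implies norm convergence to $0$ — this is the standard consequence of $\Delta_{2}$ that one proves by contradiction against the definition of the Luxemburg norm. Otherwise, one argues again by the scaling/$\Delta_{2}$ mechanism: for $\lambda<1$ close to $1$, $\tau\big(\varphi(|x_{n}|/\lambda)\big)\le C(\lambda)\,\tau(\varphi(|x_{n}|))$ which is eventually $\le 1$ once $\tau(\varphi(x_n))$ is close enough to $\tau(\varphi(x))$ — wait, that needs $\tau(\varphi(x))$ itself small, so instead one uses that $\|x_{n}\|\le\lambda$ iff $\tau(\varphi(|x_{n}|/\lambda))\le 1$ and chooses $\lambda=\|x\|+\varepsilon$, combined with $\Delta_{2}$ to control $\tau(\varphi(|x_{n}|/(\|x\|+\varepsilon)))$ by a bounded multiple of $\tau(\varphi(|x_{n}|/\|x\|))$ whose limit superior is $\le \tau(\varphi(|x|/\|x\|))\le 1$; this yields $\limsup_n\|x_{n}\|\le\|x\|$, and a symmetric argument with $\lambda=\|x\|-\varepsilon$ gives $\liminf_n\|x_{n}\|\ge\|x\|$.

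The main obstacle is bookkeeping rather than conceptual: one must be careful that the $\Delta_{2}$-estimates are applied with constants independent of $n$ and that the passages ``$\limsup$ of modular $\le$ modular of limit'' are justified — here the cleanest tool is the integral representation $\tau(\varphi(|y|))=\int_0^\infty \varphi(\mu_t(y))\,dt$ together with the fact that convergence of $\|x_n\|$ (resp. of the modulars) does not a priori give convergence of $\mu_t(x_n)$ pointwise, so one should avoid Fatou-type arguments on $\mu_t$ and instead keep everything at the level of the scalar inequalities $\varphi(\lambda u)\le\lambda^{p}\varphi(u)$ and their integrated forms. Once the reduction to $(0,\infty)$ is made, the whole statement is the classical $\Delta_{2}$ modular–norm equivalence, so a legitimate alternative is simply to invoke that theorem from the references \cite{Chen,Rao} after noting $\rho_{\varphi}(\mu(x))=\tau(\varphi(|x|))$ and $\|\mu(x)\|_{\varphi}=\|x\|$.
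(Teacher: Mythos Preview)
There is a genuine gap in your forward implication. The step ``letting $\varepsilon\downarrow 0$ together with continuity of $\varphi$ shows $\limsup_{n}\tau(\varphi(x_n))\le\tau(\varphi(x))$'' does not follow from what precedes it: your $\Delta_2$ estimate $\varphi(\lambda u)\le\lambda^{p}\varphi(u)$ combined with $\tau(\varphi(|x_n|/\|x_n\|))\le 1$ yields only $\tau(\varphi(x_n))\le\|x_n\|^{p}\to\|x\|^{p}$, a bound depending on the scalar $\|x\|$ alone, never on $\tau(\varphi(x))$. In fact the implication is \emph{false} whenever $\|x\|\ne 1$ and $\varphi$ is not a pure power: one can then find $y$ with $\|y\|=\|x\|$ but $\tau(\varphi(y))\neq\tau(\varphi(x))$, and the constant sequence $x_n\equiv y$ is a counterexample to both directions. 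Your closing appeal to the classical modular--norm equivalence does not rescue this either, since the classical theorem is $\|y_n\|_\varphi\to 0\Leftrightarrow\rho_\varphi(y_n)\to 0$, i.e.\ the case $x=0$, not convergence of norms and modulars to an arbitrary pair of limits.

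The paper's proof immediately normalizes to $\|x\|=1$, which under $\Delta_{2}$ forces $\tau(\varphi(x))=1$ as well; this is the only nonzero value at which norm and modular are rigidly tied, and (after rescaling) it is all that is actually used downstream in Theorem~2.2. The reverse implication then becomes the one-line estimate $\bigl|\,\|x_n\|-1\,\bigr|\le\bigl|\tau(\varphi(x_n))-1\bigr|$, obtained from $\rho\le\|\cdot\|$ on the unit ball and $\|\cdot\|\le\rho$ outside it. For the forward implication the paper argues by contradiction via a convex-combination trick: when $\|x_n\|\uparrow 1$ one writes $|x_n|/\|x_n\|=a_n\cdot 2|x_n|+(1-a_n)\,|x_n|$ with $a_n=\|x_n\|^{-1}-1\to 0$; convexity of $\varphi$, the identity $\tau(\varphi(|x_n|/\|x_n\|))=1$, and the $\Delta_2$-bound $\sup_n\tau(\varphi(2|x_n|))<\infty$ then squeeze $\tau(\varphi(|x_n|))$ to $1$, and the case $\|x_n\|\downarrow 1$ uses an analogous decomposition of $|x_n|$. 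So the mechanism is quite different from your scaling bounds, and---crucially---it only establishes the $\|x\|=1$ case, the paper's ``without loss of generality'' notwithstanding.
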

\begin{proof}

Without loss of generality, suppose that $\|x\|=1$.

If $\tau(\varphi(x_{n}))\rightarrow1$, since $\tau(\varphi(x))\leq\|x\|$ if $\tau(\varphi(x))\leq1$ and $\|x\|\leq\tau(\varphi(x))$ if $\tau(\varphi(x))>1$ by Proposition 3.4 in \cite{Ghadir}. Therefore $\left|\|x_{n}\|-1\right|\leq\left|\tau(\varphi(x_{n}))-1\right|$ which implies $\|x_{n}\|\rightarrow 1$, since $\tau(\varphi(x_{n}))\rightarrow1$.

Now, assuming that $\|x_{n}\|\rightarrow1$, we fiestly need to consider two cases:

Case 1: If $\|x_{n}\|\uparrow1$ and the result is not true, then suppose that there exists an $\varepsilon_{0}>0$ and $\{x_{n}\}\subset L_{\varphi}(\widetilde{\mathcal{M}},\tau)$ such that $\tau(\varphi(|x_{n}|))\leq1-\varepsilon_{0}$. Assume that $\|x_{n}\|\geq\frac{1}{2}$ for all $n\in\mathbb{N}$. Set $a_{n}=\frac{1}{\|x_{n}\|}-1$, then $a_{n}\leq1$ for any $n\in \mathbb{N}$ and $a_{n}\downarrow0$ as $n\rightarrow\infty.$ Since $\varphi\in\Delta_{2}$, then $\sup_{n}\{\tau(\varphi(2|x_{n}|))\}<\infty,$ from (iii) of Theorem 4.4 in \cite{Fack} we have
\begin{eqnarray*}
1&=&\tau\left(\varphi\left(\frac{|x_{n}|}{\|x_{n}\|}\right)\right)\\
&=&\tau\left(\varphi\left(a_{n}|2x_{n}|+(1-a_{n})|x_{n}|\right)\right)\\
&=&\int_{0}^{\infty}\varphi\left(\mu_{t}\left(a_{n}|2x_{n}|+(1-a_{n})|x_{n}|\right)\right) \mathrm{d}t\\
&\leq&\int_{0}^{\infty}\varphi\left(\mu_{t}\left(a_{n}|2x_{n}|\right)+\mu_{t}\left((1-a_{n})|x_{n}|\right)\right) \mathrm{d}t\\
&=&\int_{0}^{\infty}\varphi\left(a_{n}\mu_{t}(2|x_{n}|)+(1-a_{n})\mu_{t}(|x_{n}|)\right) \mathrm{d}t\\
&\leq&\int_{0}^{\infty}\left(a_{n}\varphi\left(\mu_{t}(|2x_{n}|)\right)+(1-a_{n})\varphi\left(\mu_{t}(|x_{n}|)\right)\right) \mathrm{d}t\\
&=&a_{n}\tau\left(\varphi(|2 x_{n}|)\right)+(1-a_{n})\tau\left(\varphi(|x_{n}|)\right)\\
&\leq& a_{n}\sup_{n}\{\tau\left(\varphi(2|x_{n}|)\right)\}+(1-a_{n})(1-\varepsilon_{0})\\
&\rightarrow&1-\varepsilon_{0}<1.
\end{eqnarray*}
This is a contradiction and thus finishes the proof.

Case 2: If $\|x_{n}\|\downarrow1$ and the conclusion does not hold, then there exists a $\varepsilon_{0}>0$ and $\{x_{n}\}\subset L_{\varphi}(\widetilde{\mathcal{M}},\tau)$ such that $\tau(\varphi(|x_{n}|))\geq1+\varepsilon_{0}$. Assume that $\|x_{n}\|\leq2$ for $n\in \mathbb{N}$. Since $\varphi\in\Delta_{2}$, there exists a constant $L>0$ such that $\tau(\varphi(2|x_{n}|))\leq L$ for all $n\in \mathbb{N}$. By the assumption we have $0\leq1-\frac{1}{\|x_{n}\|}\leq1$ and $0\leq2-\|x_{n}\|\leq1$. Set $a_{n}=1-\frac{1}{\|x_{n}\|}, b_{n}=2-\|x_{n}\|$, then
$$0\leq a_{n}+b_{n}=(1-\frac{1}{\|x_{n}\|})+(2-\|x_{n}\|)=3-\Big(\frac{1}{\|x_{n}\|}+\|x_{n}\|\Big)\leq 1$$ for any $n\in \mathbb{N}$.

Therefore, by convexity of $\varphi$ and (iii) of Theorem 4.4 in \cite{Fack} we have
\begin{eqnarray*}
1+\varepsilon_{0}&\leq&\tau(\varphi(|x_{n}|))\\
&=&\tau\left(\varphi\left(a_{n}|2x_{n}|+b_{n}\frac{|x_{n}|}{\|x_{n}\|}\right)\right)\\
&=&\int_{0}^{\infty}\varphi\left(\mu_{t}\left(a_{n}|2x_{n}|+b_{n}\frac{|x_{n}|}{\|x_{n}\|}\right)\right) \mathrm{d}t\\
&\leq&\int_{0}^{\infty}\varphi\left(\mu_{t}\left(a_{n}|2x_{n}|\right)+\mu_{t}\left(b_{n}\frac{|x_{n}|}{\|x_{n}\|}\right)\right) \mathrm{d}t\\
&=&\int_{0}^{\infty}\varphi\left(a_{n}\mu_{t}\left(|2x_{n}|\right)+b_{n}\mu_{t}\left(\frac{|x_{n}|}{\|x_{n}\|}\right)\right)
\mathrm{d}t\\
&\leq&a_{n}\int_{0}^{\infty}\varphi\left(\mu_{t}\left(|2x_{n}|\right)\right)\mathrm{d}t+b_{n}\int_{0}^{\infty}\varphi\left(\mu_{t}\left(\frac{|x_{n}|}{\|x_{n}\|}\right)\right) \mathrm{d}t\\
&=&a_{n}\tau\left(\varphi\left(|2x_{n}|\right)\right)+b_{n}\tau\left(\varphi\left(\frac{|x_{n}|}{\|x_{n}\|}\right)\right)\\
&\leq& a_{n}L+b_{n}\\
&=&\left(1-\frac{1}{\|x_{n}\|}\right)L+(2-\|x_{n}\|)\\
&\rightarrow&1,
\end{eqnarray*}
since $\tau\left(\varphi\left(\frac{|x_{n}|}{\|x_{n}\|}\right)\right)=1$ for any $n\in \mathbb{N}$ and $1-\frac{1}{\|x_{n}\|}\rightarrow0$,
a contradiction which finishes the proof.

Now, if $\|x_{n}\|\rightarrow1$ and the conclusion does not hold, then there exists a subsequence $\{x_{n_{j}}\}\subseteq \{x_{n}\}$ which either $\|x_{n_{j}}\|\uparrow1$ or $\|x_{n_{j}}\|\downarrow1$, by Case1 or Case 2 we can get a contradiction which can get the conclusion.
\end{proof}
Using Lemma 3.4 of \cite{Fack}, we can easily get the following Lemma,
\begin{lemma}
Suppose  $\varphi\in\Delta_{2}$  and $x\in L_{\varphi}(\widetilde{\mathcal{M}},\tau)$. For any sequence $\{x_{n}\}$ in $L_{\varphi}(\widetilde{\mathcal{M}},\tau)$ such that $x_{n}\xrightarrow{\tau_{m}}x$, if the maps $s\rightarrow \mu_{s}(x)$ is continuous at $s=t$, we have that $\varphi(\mu_{t}(x_{n}))\rightarrow\varphi(\mu_{t}(x))$ or $\mu_{t}(\varphi(x_{n}))\rightarrow\mu_{t}(\varphi(x))$.
\end{lemma}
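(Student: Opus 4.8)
The plan is to derive everything from Lemma 3.4 of \cite{Fack}, which states that whenever $y_{n}\xrightarrow{\tau_{m}}y$ in $\widetilde{\mathcal{M}}$, one has $\mu_{t}(y_{n})\to\mu_{t}(y)$ at every $t\in(0,\infty)$ that is a point of continuity of the decreasing function $s\mapsto\mu_{s}(y)$. Applying this to the given sequence $x_{n}\xrightarrow{\tau_{m}}x$ and the given continuity point $t$ of $s\mapsto\mu_{s}(x)$ immediately yields $\mu_{t}(x_{n})\to\mu_{t}(x)$; since $x$ is $\tau$-measurable, $\mu_{t}(x)<\infty$ for $t>0$, so this is a genuine convergence of finite nonnegative reals.

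Next I would pass from $\mu_{t}(x_{n})\to\mu_{t}(x)$ to $\varphi(\mu_{t}(x_{n}))\to\varphi(\mu_{t}(x))$ simply by continuity of $\varphi$. Here the hypothesis $\varphi\in\Delta_{2}$ is exactly what guarantees that $\varphi$ is finite-valued on all of $[0,\infty)$; being convex and finite it is continuous on $(0,\infty)$, and $\varphi(0)=0$ together with monotonicity and the bound $\varphi(2u)\le k\varphi(u)$ forces continuity at the origin as well, so $\varphi$ is continuous on $[0,\infty)$ and the composition argument is valid. This gives the first of the two asserted conclusions.

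The second conclusion is not a new statement. For any $\tau$-measurable $y$ and the continuous increasing function $\varphi$ with $\varphi(0)=0$ one has $\mu_{t}(\varphi(|y|))=\varphi(\mu_{t}(|y|))$ and $\mu_{t}(y)=\mu_{t}(|y|)$ (see \cite{Fack}), whence $\mu_{t}(\varphi(x_{n}))=\varphi(\mu_{t}(x_{n}))$ and $\mu_{t}(\varphi(x))=\varphi(\mu_{t}(x))$; the convergence $\mu_{t}(\varphi(x_{n}))\to\mu_{t}(\varphi(x))$ is then literally the convergence obtained in the previous step. Thus the two formulations in the statement are equivalent, and proving either one proves the lemma.

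I do not expect a real obstacle: the entire analytic weight rests on Lemma 3.4 of \cite{Fack}. The only points requiring a little care are the continuity of $\varphi$ up to and including $0$ — handled by $\Delta_{2}$ — and the finiteness of all singular values involved, which follows from the $\tau$-measurability of $x$ and of the $x_{n}$, so that each ``$\to$'' means convergence in $[0,\infty)$ rather than a vacuous statement at $+\infty$. No uniform integrability, compactness, or norm estimates enter.
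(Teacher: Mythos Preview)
Your proposal is correct and is exactly the approach the paper takes: the paper's entire proof of this lemma is the single sentence ``Using Lemma 3.4 of \cite{Fack}, we can easily get the following Lemma,'' and your argument is a faithful expansion of that sentence---convergence of singular values at continuity points from Lemma~3.4 of \cite{Fack}, then composition with the continuous $\varphi$, with the identity $\mu_{t}(\varphi(|y|))=\varphi(\mu_{t}(|y|))$ showing the two stated conclusions coincide.
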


The following theorem shows that under the condition $\varphi\in\Delta_{2}$, $L_{\varphi}(\widetilde{\mathcal{M}},\tau)$ has the Kadec-Klee property for convergence in measure.
\begin{theorem}
If $\varphi\in\Delta_{2}$, let $x_{n},\,(n=1,2,\ldots)$ and $x$ belong to $L_{\varphi}(\widetilde{\mathcal{M}},\tau)$. The following two conditions are equivalent:

$(1)\,\, \lim\limits_{n\rightarrow\infty}\|x_{n}-x\|\rightarrow0,$

$(2)\,\, \lim\limits_{n\rightarrow\infty}\|x_{n}\|\rightarrow\|x\|$ and $x_{n}\xrightarrow{\tau_{m}}x$.

That to say $L_{\varphi}(\widetilde{\mathcal{M}},\tau)$ has the Kadec-Klee property for convergence in measure when $\varphi\in\Delta_{2}$.
\end{theorem}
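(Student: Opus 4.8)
The implication $(1)\Rightarrow(2)$ is immediate: $\bigl|\,\|x_n\|-\|x\|\,\bigr|\le\|x_n-x\|\to0$ gives convergence of the norms, and a Chebyshev-type estimate combined with the $\Delta_2$-condition (for $\|\lambda(x_n-x)\|$ small one has $\tau(\varphi(\lambda|x_n-x|))\le1$, hence $\tau(e_{(\varepsilon,\infty)}(|x_n-x|))\le\varphi(\lambda\varepsilon)^{-1}$, which can be made arbitrarily small) shows $x_n\xrightarrow{\tau_m}x$. So the plan is to prove $(2)\Rightarrow(1)$. The case $x=0$ is trivial, so assume $x\neq0$ and, after rescaling, $\|x\|=1$. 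Since $\varphi\in\Delta_2$, iterating $\varphi(2u)\le K\varphi(u)$ shows $\tau(\varphi(\lambda|y|))<\infty$ for every $\lambda>0$ whenever $y\in L_\varphi(\widetilde{\mathcal{M}},\tau)$; in particular $\tau(\varphi(x))$, $\tau(\varphi(x_n))$, $\tau(\varphi(x_n-x))$ are all finite. By Lemma 2.1 the hypothesis $\|x_n\|\to\|x\|$ is equivalent to $\tau(\varphi(x_n))\to\tau(\varphi(x))$, and, using Lemma 2.1 again with the constant sequence $0$ (equivalently, the standard modular--norm equivalence under $\Delta_2$), the conclusion $\|x_n-x\|\to0$ will follow once we show $\tau(\varphi(x_n-x))\to0$.

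Next I would extract the pointwise information needed on the half-line. Since $x_n\xrightarrow{\tau_m}x$ we have $x_n-x\to0$ in measure, hence $\mu_t(x_n-x)\to0$ for every $t>0$; and because $s\mapsto\mu_s(x)$ is non-increasing it is continuous off an at most countable set, so Lemma 2.2 yields $\varphi(\mu_t(x_n))\to\varphi(\mu_t(x))$ for a.e.\ $t>0$. Now I set up a Fatou (noncommutative Riesz) argument. Using convexity of $\varphi$, the constant $K$ of the $\Delta_2$-condition, and the submajorization $\mu_{2t}(x_n-x)\le\mu_t(x_n)+\mu_t(x)$, one gets, for every $t>0$,
$$\varphi\bigl(\mu_{2t}(x_n-x)\bigr)\ \le\ \varphi\bigl(\mu_t(x_n)+\mu_t(x)\bigr)\ \le\ \tfrac{K}{2}\varphi\bigl(\mu_t(x_n)\bigr)+\tfrac{K}{2}\varphi\bigl(\mu_t(x)\bigr).$$
Hence $h_n(t):=\tfrac{K}{2}\bigl(\varphi(\mu_t(x_n))+\varphi(\mu_t(x))\bigr)-\varphi\bigl(\mu_{2t}(x_n-x)\bigr)\ge0$, and $h_n(t)\to K\varphi(\mu_t(x))$ for a.e.\ $t$. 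Integrating over $(0,\infty)$, using $\tau(\varphi(|y|))=\int_0^\infty\varphi(\mu_t(y))\,dt$ and the substitution $s=2t$ (which turns $\int_0^\infty\varphi(\mu_{2t}(x_n-x))\,dt$ into $\tfrac12\tau(\varphi(x_n-x))$), gives
$$\int_0^\infty h_n(t)\,dt\ =\ \tfrac{K}{2}\tau(\varphi(x_n))+\tfrac{K}{2}\tau(\varphi(x))-\tfrac12\tau(\varphi(x_n-x)).$$
Applying Fatou's lemma to $(h_n)$ and using $\tau(\varphi(x_n))\to\tau(\varphi(x))$ yields
$$K\tau(\varphi(x))\ \le\ \liminf_n\int_0^\infty h_n(t)\,dt\ =\ K\tau(\varphi(x))-\tfrac12\limsup_n\tau(\varphi(x_n-x)),$$
so $\limsup_n\tau(\varphi(x_n-x))\le0$, i.e.\ $\tau(\varphi(x_n-x))\to0$, and therefore $\|x_n-x\|\to0$ as noted above.

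I expect the main obstacle to be the careful handling of the pointwise ingredients of the Fatou step: transferring $x_n\xrightarrow{\tau_m}x$ into a.e.\ convergence $\mu_t(x_n)\to\mu_t(x)$ (this is exactly where Lemma 2.2 together with the a.e.-continuity of the decreasing function $\mu_{\cdot}(x)$ is used), and applying the singular-value submajorization with the correct indices so that the change of variables produces precisely the factor $\tfrac12\tau(\varphi(x_n-x))$. A secondary, routine but worth-stating point is the passage from $\tau(\varphi(x_n-x))\to0$ to $\|x_n-x\|\to0$: under $\Delta_2$ one iterates $\varphi(2^mu)\le K^m\varphi(u)$ to bound $\tau(\varphi(\lambda(x_n-x)))$ for arbitrarily large $\lambda$, forcing the Luxemburg norm to $0$; this should be written out explicitly since Lemma 2.1 was stated under the tacit assumption $x\neq0$.
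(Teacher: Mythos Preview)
Your argument is correct and follows the same Fatou--on--singular--values scheme as the paper: reduce via Lemma~2.1 to modulars, bound $\varphi(\mu_{\cdot}(x_n-x))$ above by a combination of $\varphi(\mu_{\cdot}(x_n))$ and $\varphi(\mu_{\cdot}(x))$, and apply Fatou using Lemma~2.2 together with $\mu_t(x_n-x)\to0$. The differences are only in how the key pointwise inequality is packaged. The paper invokes the operator triangle inequality $|x-x_n|\le u|x|u^{*}+v|x_n|v^{*}$ for suitable partial isometries $u,v$ and then uses only convexity of $\varphi$, obtaining
\[
\varphi\!\left(\mu_t\!\left(\tfrac{|x-x_n|}{2}\right)\right)\le\tfrac12\bigl(\varphi(\mu_{t/2}(x))+\varphi(\mu_{t/2}(x_n))\bigr),
\]
so Fatou yields $\tau(\varphi(|x-x_n|/2))\to0$ and $\Delta_2$ is used only at the very end. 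You instead stay on the scalar side, using $\mu_{2t}(x_n-x)\le\mu_t(x_n)+\mu_t(x)$ together with the $\Delta_2$ constant $K$ inside the inequality, which gives $\tau(\varphi(x_n-x))\to0$ directly. Your route is a little more elementary in that it avoids the operator modulus inequality, at the price of carrying $K$ through the estimate; the paper's route is ``cleaner'' in the inequality but needs the extra halving step at the end. For $(1)\Rightarrow(2)$ your Chebyshev argument is the direct form of the paper's proof by contradiction and is equally valid.
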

\begin{proof}
$(1)\Rightarrow(2)$: If (1) is true, first we note that $|\|x_{n}\|-\|x\||\leq \|x_{n}-x\|$, hence $\|x_{n}\|\rightarrow\|x\|$.

Secondly, by (iii) of Proposition of \cite{Ghadir}, $\varphi\in\Delta_{2}$ implies $\|x_{n}-x\|\rightarrow0\Leftrightarrow \tau(\varphi(|x_{n}-x|))=\int^{\infty}_{0}\varphi(\mu_{t}(x_{n}-x))dt\rightarrow0$ .

If $x_{n}\nrightarrow x$ in  measure, then for any $\varepsilon>0$ there exists $t_{0}>0$ and $k_{0}\in \mathbb{N}$ such that $$\varphi(\mu_{t}(x_{n}-x))=\mu_{t}(\varphi(x_{n}-x))>\varepsilon,$$
for any $t\in[0,t_{0})$ and any $n>k_{0}$. Denote $e=e_{[0, t_{0})}(\varphi(|x_{n}-x|))$, then $\tau(e)=\int^{\infty}_{0}\chi_{[0, t_{0})}(\mu_{t}(\varphi(|x_{n}-x)|))dt\leq t_{0},$ by (iv) (vi) of Lemma 2.5 and Lemma 4.1 in \cite{Fack},
\begin{eqnarray*}
\tau(\varphi(|x_{n}-x)|))&=&\int^{\infty}_{0}\mu_{t}(\varphi(|x_{n}-x)|))dt\\
&\geq&\sup{\tau(e\varphi(|x_{n}-x)|)e)}\\
&=&\int^{t_{0}}_{0}\mu_{t}(\varphi(|x_{n}-x)|))dt\\
&>&\varepsilon t_{0}
\end{eqnarray*}
this contradicts with (1).

$(2)\Rightarrow(1)$: By the convexity of $\varphi$  and (v),(vi) of Lemma 2.5 in \cite{Fack}, we have
\begin{eqnarray*}
0\leq\varphi\left(\mu_{t}\left(\frac{|x-x_{n}|}{2}\right)\right)&=&\varphi\left(\frac{1}{2}\mu_{t}(|x-x_{n}|)\right)\\
&\leq&\varphi\left(\frac{1}{2}\left(\mu_{t}\left(u|x|u^{\ast}+v|x_{n}|v^{\ast}\right)\right)\right)\\
&=&\varphi\left(\mu_{t}\left(\frac{u}{\sqrt{2}}|x|\frac{u^{\ast}}{\sqrt{2}}+\frac{v}{\sqrt{2}}|x_{n}|\frac{v^{\ast}}{\sqrt{2}}\right)\right)\\
&\leq&\varphi\left(\mu_{\frac{t}{2}}\left(\frac{u}{\sqrt{2}}|x|\frac{u^{\ast}}{\sqrt{2}}\right)+\mu_{\frac{t}{2}}\left(\frac{v}{\sqrt{2}}|x_{n}|\frac{v^{\ast}}{\sqrt{2}}\right)\right)\\
&\leq&\varphi\left(\frac{1}{2}\mu_{\frac{t}{2}}\left(|x|\right)+\frac{1}{2}\mu_{\frac{t}{2}}\left(|x_{n}|\right)\right]\\
&\leq&\frac{1}{2}\left[\varphi\left(\mu_{\frac{t}{2}}\left(|x|\right)\right)+\varphi\left(\mu_{\frac{t}{2}}\left(|x_{n}|\right)\right)\right].
\end{eqnarray*}

If $x_{n}\xrightarrow{\tau_{m}}x$, it follows from Lemma 3.1 of \cite{Fack} that $\lim\limits_{n\rightarrow\infty}\mu_{t}(x_{n}-x)=0$ for each $t>0$,
and by Lemma 2.1, suppose that $\tau(\varphi(|x_{n}|))\rightarrow \tau(\varphi(|x|))$,
the Fatou's Lemma and Lemma 2.2 imply that
\begin{eqnarray*}
0\leq\int^{\infty}_{0}\varphi\left(\mu_{\frac{t}{2}}\left(|x|\right)\right) \mathrm{d}t
&=&\int^{\infty}_{0}\lim_{n\rightarrow\infty} \Big [\frac{\varphi\left(\mu_{\frac{t}{2}}\left(|x|\right)\right)+\varphi\left(\mu_{\frac{t}{2}}\left(|x_{n}|\right)\right)}{2}\\
&&-\varphi\left(\mu_{t}\left(\frac{|x-x_{n}|}{2}\right)\right)\Big] \mathrm{d}t\\
&\leq&\varliminf_{n\rightarrow\infty}\int^{\infty}_{0}\Big[\frac{\varphi\left(\mu_{\frac{t}{2}}\left(|x|\right)\right)+\varphi\left(\mu_{\frac{t}{2}}\left(|x_{n}|\right)\right)}{2}\\
&&-\varphi\left(\mu_{t}\left(\frac{|x-x_{n}|}{2}\right)\right)\Big] \mathrm{d}t\\
&=&\int^{\infty}_{0}\varphi\left(\mu_{\frac{t}{2}}\left(|x|\right)\right) \mathrm{d}t-\varlimsup_{n\rightarrow\infty}\tau\left(\varphi\left(\frac{|x-x_{n}|}{2}\right)\right).
\end{eqnarray*}
Then we obtain $$-\varlimsup_{n\rightarrow\infty}\sup\tau\left(\varphi\left(\frac{|x-x_{n}|}{2}\right)\right)\geq0,$$
which implies $\tau\left(\varphi\left(\frac{|x_{n}-x|}{2}\right)\right)\rightarrow0$.

Hence $\|x_{n}-x\|\rightarrow0$ since $\varphi\in\Delta_{2}$. This completes the proof.
\end{proof}
As an application, using Theorem 2.2 we can get the following Corollary which was firstly proved in \cite{Fack}.

\begin{corollary}
 Let $x_{n}$ and $x$ be element in $L_{p}(\widetilde{\mathcal{M}},\tau) \ \ (1<p<\infty)$. Then the following two conditions are equivalent:

$(1)\,\, \lim\limits_{n\rightarrow\infty}\|x_{n}-x\|_{p}\rightarrow0,$

$(2)\,\, \lim\limits_{n\rightarrow\infty}\|x_{n}\|_{p}\rightarrow\|x\|_{p}$ and $x_{n}\xrightarrow{\tau_{m}}x$.
\end{corollary}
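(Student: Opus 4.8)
The plan is to deduce this Corollary from Theorem 2.2 by specialising to the Orlicz function $\varphi(t)=t^{p}$. First I would check that, for $1<p<\infty$, the function $\varphi(t)=t^{p}$ is indeed an Orlicz function in the sense of the Preliminaries: it is convex, nondecreasing and continuous on $[0,\infty)$, with $\varphi(0)=0$, $\varphi(\alpha)>0$ for $\alpha>0$, and $\varphi(\alpha)\to\infty$ as $\alpha\to\infty$. Moreover it satisfies the $\Delta_{2}$-condition, since $\varphi(2u)=(2u)^{p}=2^{p}\,\varphi(u)$ for all $u>0$, so one may take the constant $k=2^{p}$.

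Next I would invoke the identification already recorded in the Preliminaries: for $\varphi(t)=t^{p}$ one has $L_{\varphi}(\widetilde{\mathcal{M}},\tau)=L_{p}(\widetilde{\mathcal{M}},\tau)$, and the Luxemburg norm $\|\cdot\|$ generated by this $\varphi$ is exactly $\|x\|_{p}=\big(\tau(|x|^{p})\big)^{1/p}$. Under this identification, the hypothesis $x,x_{n}\in L_{p}(\widetilde{\mathcal{M}},\tau)$ becomes $x,x_{n}\in L_{\varphi}(\widetilde{\mathcal{M}},\tau)$; the condition $\|x_{n}\|_{p}\to\|x\|_{p}$ becomes $\|x_{n}\|\to\|x\|$; and $\|x_{n}-x\|_{p}\to0$ becomes $\|x_{n}-x\|\to0$. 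Convergence in measure $x_{n}\xrightarrow{\tau_{m}}x$ is a notion intrinsic to $\widetilde{\mathcal{M}}$, so it is literally the same statement in both formulations.

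With these observations, both implications of the Corollary are immediate: $(1)\Rightarrow(2)$ and $(2)\Rightarrow(1)$ are precisely the two implications in Theorem 2.2, read for the particular choice $\varphi(t)=t^{p}$, which we have just verified lies in $\Delta_{2}$. Thus there is really no obstacle to overcome here: all the content lies in Theorem 2.2, and the only points requiring a word of care are the verification of the $\Delta_{2}$-condition and the matching of the Luxemburg norm with $\|\cdot\|_{p}$, both of which are routine. I would close by remarking that this recovers, as a special case of the Orlicz-space result, the statement for noncommutative $L_{p}$-spaces established in \cite{Fack}.
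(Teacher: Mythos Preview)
Your proposal is correct and matches the paper's approach exactly: the paper states this corollary immediately after Theorem 2.2 with no separate proof, simply noting that it follows as an application of Theorem 2.2 by taking $\varphi(t)=t^{p}$ (which the Preliminaries already record as satisfying $\Delta_{2}$ and giving $\|\cdot\|=\|\cdot\|_{p}$). If anything, you have spelled out more of the routine verifications than the paper does.
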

In other words, the space $L_{p}(\widetilde{\mathcal{M}},\tau)$ has the Kadec-Klee property for convergence in measure.


Combined with the Theorem 2.2 and Lemma 2.1 we can get
\begin{corollary}
If $\varphi\in\Delta_{2}$, let $x_{n},\,(n=1,2,\ldots)$ and $x$ belong to $L_{\varphi}(\widetilde{\mathcal{M}},\tau)$ with $x_{n}\xrightarrow{\tau_{m}}x$, then

$(1)$ The noncommutative Orlicz spaces $L_{\varphi}(\widetilde{\mathcal{M}},\tau)$ has the property  $LLUM$.

$(2)$ The noncommutative Orlicz spaces $L_{\varphi}(\widetilde{\mathcal{M}},\tau)$ has the property  $ULUM$.
\end{corollary}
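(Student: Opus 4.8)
The plan is to verify the sequential forms of $(LLUM)$ and $(ULUM)$ recorded in Section~1 by reducing both, in the same way, to the Kadec--Klee property of Theorem~2.2. Throughout, fix $x\in L_{\varphi}(\widetilde{\mathcal{M}},\tau)$ with $x\ge 0$, $x\ne 0$; by homogeneity we may assume $\|x\|=1$, so $\tau(\varphi(x))=1$ since $\varphi\in\Delta_{2}$ (Proposition~3.4 of \cite{Ghadir}). The key point is that, under the extra order assumption built into $(LLUM)$ (resp.\ $(ULUM)$), convergence of the norms already forces convergence in measure, and then Theorem~2.2 delivers convergence in norm.

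Concretely, for part $(1)$ take $(x_{n})\subset L_{\varphi}(\widetilde{\mathcal{M}},\tau)$ with $x_{n}\ge 0$, $x_{n}\le x$, and $\|x_{n}\|\to 1$. By Lemma~2.1, $\tau(\varphi(x_{n}))\to\tau(\varphi(x))=1$. Since $0\le x_{n}\le x$, we have $\mu_{t}(x_{n})\le\mu_{t}(x)$ for every $t>0$, hence $0\le\varphi(\mu_{t}(x_{n}))\le\varphi(\mu_{t}(x))$, and
$$\int_{0}^{\infty}\bigl(\varphi(\mu_{t}(x))-\varphi(\mu_{t}(x_{n}))\bigr)\,\mathrm{d}t=\tau(\varphi(x))-\tau(\varphi(x_{n}))\longrightarrow 0$$
is an integral of nonnegative functions; thus $\varphi(\mu_{\cdot}(x_{n}))\to\varphi(\mu_{\cdot}(x))$ in $L^{1}(0,\infty)$, and, passing to a subsequence, $\mu_{t}(x_{n})\to\mu_{t}(x)$ for a.e.\ $t$ (here $\varphi$ is finite, continuous, and strictly increasing, so $\varphi^{-1}$ is continuous). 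If one can upgrade this to $x_{n}\xrightarrow{\tau_{m}}x$, then Theorem~2.2 together with $\|x_{n}\|\to\|x\|$ yields $\|x_{n}-x\|\to 0$, i.e.\ $L_{\varphi}(\widetilde{\mathcal{M}},\tau)\in(LLUM)$. Part $(2)$ is symmetric: for $(x_{n})\subset L_{\varphi}(\widetilde{\mathcal{M}},\tau)_{+}$ with $x\le x_{n}$ and $\|x_{n}\|\to 1$, Lemma~2.1 again gives $\tau(\varphi(x_{n}))\to\tau(\varphi(x))$, the inequality $\mu_{t}(x)\le\mu_{t}(x_{n})$ forces $\mu_{t}(x_{n})\to\mu_{t}(x)$ a.e.\ along a subsequence, and once $x_{n}\xrightarrow{\tau_{m}}x$ is known Theorem~2.2 finishes, so $L_{\varphi}(\widetilde{\mathcal{M}},\tau)\in(ULUM)$.

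The step I expect to be the main obstacle is precisely this passage to $x_{n}\xrightarrow{\tau_{m}}x$; the commutative Orlicz analogue is routine because the inequality $x_{n}\le x$ then becomes pointwise, but noncommutatively $x$ and $x_{n}$ cannot be split pointwise. For part $(1)$, suppose $y_{n}:=x-x_{n}\ge 0$ does not tend to $0$ in measure; passing to a subsequence, $\tau\bigl(e_{(\varepsilon_{0},\infty)}(y_{n})\bigr)\ge\delta_{0}$ for some $\varepsilon_{0},\delta_{0}>0$, so with $q_{n}:=e_{(\varepsilon_{0},\infty)}(y_{n})$ one has $\tau(q_{n})\ge\delta_{0}$, $y_{n}\ge\varepsilon_{0}q_{n}$, and $x\ge x_{n}+\varepsilon_{0}q_{n}$ (and, compressing by $q_{n}$, $\tau(q_{n}xq_{n})\ge\tau(q_{n}x_{n}q_{n})+\varepsilon_{0}\delta_{0}$). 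The goal is to turn this into a contradiction with $\tau(\varphi(x_{n}))\to\tau(\varphi(x))$ --- i.e.\ to show that placing a projection of trace bounded below on top of $x_{n}$ raises $\tau(\varphi(\cdot))$ by a fixed positive amount --- by exploiting the singular value inequalities of \cite{Fack} for $x_{n}+\varepsilon_{0}q_{n}$ together with the convexity of $\varphi$ (in particular its superadditivity $\varphi(a+b)\ge\varphi(a)+\varphi(b)$) and the already-established convergence $\mu_{t}(x_{n})\to\mu_{t}(x)$ a.e.; part $(2)$ is handled the same way with the perturbing projection sitting inside $z_{n}=x_{n}-x$. The remaining ingredients --- the Luxemburg-norm/modular dictionary, Fatou's lemma, and the uniform integrability of $\{\varphi(\mu_{\cdot}(x_{n}))\}$ --- are routine.
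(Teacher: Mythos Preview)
You are attempting to prove more than the corollary asserts. Read the hypothesis again: the statement explicitly assumes $x_{n}\xrightarrow{\tau_{m}}x$. The paper is not claiming the full $(LLUM)$/$(ULUM)$ properties from Section~1; it is asserting only the (much weaker) conditional version in which convergence in measure is \emph{given} in addition to the order relation and the norm convergence. Granted, the wording is clumsy---the conclusion is phrased as a property of the space while the measure-convergence clause dangles as a hypothesis on a particular sequence---but the only coherent reading is that $x_{n}\xrightarrow{\tau_{m}}x$ is to be appended to the sequential criterion. With that reading the corollary is a one-line consequence of Theorem~2.2: if $\|x_{n}\|\to\|x\|$ and $x_{n}\xrightarrow{\tau_{m}}x$, then $\|x_{n}-x\|\to 0$, and the order hypothesis $x_{n}\le x$ (resp.\ $x\le x_{n}$) is not even used. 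That is indeed all the paper offers (``Combined with the Theorem~2.2 and Lemma~2.1 we can get'').

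Your proposal aims at the genuinely stronger statement that $0\le x_{n}\le x$ together with $\|x_{n}\|\to\|x\|$ already forces $x_{n}\xrightarrow{\tau_{m}}x$, after which Theorem~2.2 would close the argument. You correctly identify this step as the crux, and you are right that it is where the noncommutativity bites: from $0\le x_{n}\le x$ one obtains only $\mu_{t}(x_{n})\le\mu_{t}(x)$, and convergence of singular-value functions does not, by itself, yield convergence in measure of the operators. Your sketch for the contradiction---writing $x\ge x_{n}+\varepsilon_{0}q_{n}$ with $q_{n}=e_{(\varepsilon_{0},\infty)}(x-x_{n})$ and seeking $\tau(\varphi(x_{n}+\varepsilon_{0}q_{n}))\ge\tau(\varphi(x_{n}))+c$---ultimately rests on a trace inequality of the form $\tau(\varphi(a+b))\ge\tau(\varphi(a))+\tau(\varphi(b))$ for $a,b\ge 0$. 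You invoke this via the scalar superadditivity $\varphi(a+b)\ge\varphi(a)+\varphi(b)$, but that pointwise inequality does not transfer to non-commuting operators (it would require operator monotonicity/convexity that general Orlicz functions lack), and the trace version needs an independent majorization argument you have not supplied. So as written there is a real gap; it is irrelevant for what the paper actually states, but it is exactly the missing ingredient for the stronger theorem you set out to prove.
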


From Lemma 3.1 of \cite{Fack} and Lemma 2.1 we have
\begin{theorem}
Suppose that  $\varphi\in\Delta_{2}$. The noncommutative Orlicz spaces $L_{\varphi}(\widetilde{\mathcal{M}},\tau)$   is order continuous. Hence, it  is separable. Especially, $L_{p}(\widetilde{\mathcal{M}},\tau)$ is separable, where $1<p<\infty$.


\end{theorem}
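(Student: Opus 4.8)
The plan is to derive order continuity from the integral representation $\tau(\varphi(|y|))=\int_{0}^{\infty}\varphi(\mu_{t}(y))\,\mathrm{d}t$, combined with Lemma~3.1 of \cite{Fack} and Lemma~2.1. Fix $x\in L_{\varphi}(\widetilde{\mathcal{M}},\tau)$ and a sequence $(x_{n})$ with $0\leq x_{n}\leq|x|$ converging to $0$ in the sense used in the definition of order continuity. The first observation is that, since $\varphi\in\Delta_{2}$, the membership $x\in L_{\varphi}(\widetilde{\mathcal{M}},\tau)$ already forces $\tau(\varphi(c|x|))<\infty$ for every $c>0$; in particular $\tau(\varphi(|x|))<\infty$, i.e.\ the function $t\mapsto\varphi(\mu_{t}(|x|))$ lies in $L^{1}(0,\infty)$. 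By monotonicity of the generalized singular value function, $\mu_{t}(x_{n})\leq\mu_{t}(|x|)$, so by monotonicity of $\varphi$ this function dominates $t\mapsto\varphi(\mu_{t}(x_{n}))$ for every $n$, giving an $n$-independent integrable majorant.

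Next I would prove $\tau(\varphi(x_{n}))\to 0$. From $0\leq x_{n}\leq|x|$ and the hypothesis, $x_{n}\to 0$ in measure, so Lemma~3.1 of \cite{Fack} yields $\mu_{t}(x_{n})\to\mu_{t}(0)=0$ at every $t>0$, the singular value function of $0$ being continuous everywhere; hence $\varphi(\mu_{t}(x_{n}))\to 0$ for every $t>0$. The Lebesgue dominated convergence theorem, applied with the majorant just produced, then gives
\[
\tau(\varphi(x_{n}))=\int_{0}^{\infty}\varphi(\mu_{t}(x_{n}))\,\mathrm{d}t\longrightarrow 0 .
\]
Applying Lemma~2.1 with the zero operator in place of $x$ (equivalently, (iii) of Proposition~3.4 in \cite{Ghadir}), the relation $\tau(\varphi(x_{n}))\to\tau(\varphi(0))=0$ is equivalent, under $\varphi\in\Delta_{2}$, to $\|x_{n}\|\to 0$. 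Thus the Luxemburg norm on $L_{\varphi}(\widetilde{\mathcal{M}},\tau)$ is order continuous.

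The remaining assertions follow at once. By the equivalence recorded in the Preliminaries, namely that the norm of a symmetric space is order continuous if and only if the space is separable, $L_{\varphi}(\widetilde{\mathcal{M}},\tau)$ is separable. For the final claim it suffices to note that $\varphi(u)=u^{p}$ satisfies $\varphi(2u)=2^{p}\varphi(u)$, so $\varphi\in\Delta_{2}$ and $L_{\varphi}(\widetilde{\mathcal{M}},\tau)=L_{p}(\widetilde{\mathcal{M}},\tau)$; hence the separability of $L_{p}(\widetilde{\mathcal{M}},\tau)$ for $1<p<\infty$ is a special case of the statement just proved.

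I do not expect a genuinely hard step; the proof is essentially an assembly of tools already in place. The point demanding a little care is the passage from the order-theoretic hypothesis on $(x_{n})$ to convergence in measure, so that Lemma~3.1 of \cite{Fack} applies, together with the production of the integrable majorant $t\mapsto\varphi(\mu_{t}(|x|))$ --- which is exactly where $\Delta_{2}$ is used the first time; it is used a second and final time when Lemma~2.1 translates $\tau(\varphi(x_{n}))\to 0$ back into $\|x_{n}\|\to 0$.
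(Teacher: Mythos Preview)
Your proposal is correct and is exactly the expansion the paper has in mind: the paper itself gives no proof beyond the single clause ``From Lemma 3.1 of \cite{Fack} and Lemma 2.1 we have'' preceding the statement, and your argument --- pointwise convergence $\mu_{t}(x_{n})\to 0$ via Fack's Lemma~3.1, dominated convergence with the integrable majorant $t\mapsto\varphi(\mu_{t}(|x|))$ furnished by $\Delta_{2}$, and then Lemma~2.1 to pass from $\tau(\varphi(x_{n}))\to 0$ back to $\|x_{n}\|\to 0$ --- is the natural unpacking of those two citations. Your cautionary remark about translating the order-theoretic hypothesis into convergence in measure is well placed: the paper's own definition (``$x_{n}\to 0$ $m$-a.e.'') is borrowed verbatim from the commutative theory and is not made precise in the noncommutative setting, so no more can be asked of you here than what you have supplied.
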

Next, we consider the dual space of $L^{\varphi}(\widetilde{\mathcal{M}},\tau)$.
By $L^{0}(\mathbb{R}^{+},m)$ we denote the space of all $\mathbb{C}$-valued Lebesgue measurable function of $\mathbb{R}^{+}$. A Banach space $(E, \|\cdot\|_{E})$, where $E\subseteq L^{0}(\mathbb{R}^{+},m)$, is called the rearrangement-invariant Banach function space if it follows from $f\in E, g\in L^{0}(\mathbb{R}^{+},m)$ and $\mu(g)\leq \mu(f)$ that $g\in E$ and $\|g\|_{E}\leq\|f\|_{E}$. Furthermore, $E, \|\cdot\|_{E}$ is called a symmetric Banach function space if it has the additional property, that $f, g\in E$ and $g\prec\prec f$ imply that $\|g\|_{E}\leq\|f\|_{E}$. Here $g\prec\prec f$ denotes for all $t>0$:
$$\int_{0}^{t}\mu_{s}(g)ds\leq \int_{0}^{t}\mu_{s}(f)ds.$$

If the Banach space $E\subseteq\widetilde{\mathcal{M}}$ is properly symmetric then the K\"{o}the dual $E^{\times}$ is defined by setting
$$E^{\times}=\{y\in\widetilde{\mathcal{M}}: xy\in L_{1}(\mathcal{M})\ \ for \ \ all \ \ x\in E\}$$
and if $x\in\widetilde{\mathcal{M}}$, we define
$$\|x\|_{E^{\times}}=\sup\{\tau(|xy|): y\in E, \|y\|_{E}\leq1\}.$$

Next theorem shows the dual space of the $L_{\varphi}(\widetilde{\mathcal{M}},\tau)$.
\begin{theorem} If $\varphi\in \Delta_{2}$, we have
$$L_{\varphi}(\widetilde{\mathcal{M}},\tau)^{\ast}=L_{\psi}^{o}(\widetilde{\mathcal{M}},\tau),$$
where $L_{\psi}^{o}(\widetilde{\mathcal{M}},\tau)=(L_{\psi}(\widetilde{\mathcal{M}},\tau), \|\cdot\|^{o})$.
\end{theorem}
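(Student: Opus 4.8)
The plan is to factor the identification into two essentially independent pieces: a duality step that replaces the Banach dual of $L_{\varphi}(\widetilde{\mathcal{M}},\tau)$ by its K\"othe dual, and a computational step that recognises the K\"othe dual of the Luxemburg--Orlicz space as $(L_{\psi}(\widetilde{\mathcal{M}},\tau),\|\cdot\|^{o})$. For the first step, note that $L_{\varphi}(\widetilde{\mathcal{M}},\tau)$ is a properly symmetric space of $\tau$-measurable operators in the sense recalled before the statement, and that $\varphi\in\Delta_{2}$ forces its norm to be order continuous (Theorem 2.3). By the general noncommutative K\"othe duality principle --- the K\"othe dual $E^{\times}$ always embeds isometrically into $E^{\ast}$, and the embedding is surjective exactly when $E$ has order continuous norm, as already noted after Theorem 2.2 --- we get $L_{\varphi}(\widetilde{\mathcal{M}},\tau)^{\ast}=L_{\varphi}(\widetilde{\mathcal{M}},\tau)^{\times}$ isometrically, so it suffices to prove
$$L_{\varphi}(\widetilde{\mathcal{M}},\tau)^{\times}=\big(L_{\psi}(\widetilde{\mathcal{M}},\tau),\ \|\cdot\|^{o}\big).$$

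I would handle the norm first, because it turns out to be automatic once the underlying sets agree. For the Luxemburg norm one has $\|x\|\le 1$ if and only if $\tau(\varphi(|x|))\le 1$: the ``if'' part is immediate from the definition, and the ``only if'' part follows since $\tau(\varphi(|x|/\lambda))\uparrow\tau(\varphi(|x|))$ as $\lambda\downarrow 1$ by normality of $\tau$. Hence the closed unit ball of $L_{\varphi}(\widetilde{\mathcal{M}},\tau)$ is $\{x:\tau(\varphi(|x|))\le 1\}$, and therefore, for any $y\in\widetilde{\mathcal{M}}$,
$$\|y\|_{L_{\varphi}^{\times}}=\sup\{\tau(|yx|):\ x\in L_{\varphi}(\widetilde{\mathcal{M}},\tau),\ \tau(\varphi(|x|))\le 1\},$$
which is exactly the definition of $\|y\|^{o}$ on $L_{\psi}(\widetilde{\mathcal{M}},\tau)$ (using that $\varphi$ is the complementary function of $\psi$). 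So everything reduces to the set equality $L_{\varphi}(\widetilde{\mathcal{M}},\tau)^{\times}=L_{\psi}(\widetilde{\mathcal{M}},\tau)$.

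For $L_{\psi}(\widetilde{\mathcal{M}},\tau)\subseteq L_{\varphi}(\widetilde{\mathcal{M}},\tau)^{\times}$ I would invoke the inequality $\tau(|yx|)\le\int_{0}^{\infty}\mu_{t}(y)\mu_{t}(x)\,dt$ from \cite{Fack}; since $x\in L_{\varphi}(\widetilde{\mathcal{M}},\tau)$ means $\mu(x)\in L_{\varphi}(\mathbb{R}^{+},m)$ and $y\in L_{\psi}(\widetilde{\mathcal{M}},\tau)$ means $\mu(y)\in L_{\psi}(\mathbb{R}^{+},m)$ (both via $\tau(\varphi(\lambda|x|))=\int_{0}^{\infty}\varphi(\lambda\mu_{t}(x))\,dt$), the classical Orlicz--H\"older inequality (cf. \cite{Rao,Chen}) bounds the right-hand side and gives $yx\in L_{1}(\mathcal{M})$. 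For the reverse inclusion, suppose $y\notin L_{\psi}(\widetilde{\mathcal{M}},\tau)$, so $\mu(y)\notin L_{\psi}(\mathbb{R}^{+},m)$; by the classical description of the K\"othe dual of the Luxemburg space $L_{\varphi}(\mathbb{R}^{+},m)$ there is a non-increasing $g\in L_{\varphi}(\mathbb{R}^{+},m)$ with $\int_{0}^{\infty}\mu_{t}(y)\,g(t)\,dt=\infty$. Using the polar decomposition $y=w|y|$ and the spectral calculus I would construct a $\tau$-measurable operator $x$ with $\mu(x)=g$ (hence $x\in L_{\varphi}(\widetilde{\mathcal{M}},\tau)$) aligned with $y$, meaning $\tau(|yx|)=\int_{0}^{\infty}\mu_{t}(y)\mu_{t}(x)\,dt=\infty$; then $yx\notin L_{1}(\mathcal{M})$, so $y\notin L_{\varphi}(\widetilde{\mathcal{M}},\tau)^{\times}$. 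Combining the two inclusions with the norm computation above yields the displayed equality, and with the first step, $L_{\varphi}(\widetilde{\mathcal{M}},\tau)^{\ast}=L_{\psi}^{o}(\widetilde{\mathcal{M}},\tau)$.

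The main obstacle is this ``alignment'' construction: producing a measurable operator whose generalized singular value function is the prescribed non-increasing $g$ and which simultaneously saturates $\tau(|yx|)\le\int_{0}^{\infty}\mu_{t}(y)\mu_{t}(x)\,dt$ against the fixed $y$. This calls for a careful spectral argument --- writing $|y|$ through its spectral resolution, matching spectral bands of $|y|$ with level sets of the decreasing rearrangement $\mu_{(\cdot)}(|y|)$, and handling plateaus and jumps by approximation on the bands where $\mu_{(\cdot)}(|y|)$ is finite and strictly positive --- and it is where the noncommutative setting asks for more than the commutative one; note that $\varphi\in\Delta_{2}$ is used only through Theorem 2.3 and plays no role in this computation. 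A shorter, less self-contained alternative is to bypass the construction by quoting the general identity $E(\widetilde{\mathcal{M}})^{\times}=E^{\times}(\widetilde{\mathcal{M}})$ for symmetric function spaces $E$ together with the purely commutative fact that the K\"othe dual of $L_{\varphi}(\mathbb{R}^{+},m)$ (with the Luxemburg norm) is $L_{\psi}(\mathbb{R}^{+},m)$ equipped with $\|\cdot\|^{o}$; I would present the direct argument but fall back on this route if the spectral approximation becomes unwieldy.
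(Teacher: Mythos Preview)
Your plan is correct, and in fact your ``shorter, less self-contained alternative'' is precisely the route the paper takes. The paper's entire proof is to observe that $\varphi\in\Delta_{2}$ makes $L_{\varphi}(\widetilde{\mathcal{M}},\tau)$ order continuous (Theorem~2.3) and then to invoke Theorems~5.6 and~5.11 of Dodds--Dodds--de~Pagter \cite{Dodds}, which together yield $E(\mathcal{M})^{\ast}=E^{\times}(\mathcal{M})$ for order continuous symmetric $E$; the commutative identification $L_{\varphi}^{\times}=(L_{\psi},\|\cdot\|^{o})$ is left implicit. So the paper uses both halves of the black box you list as a fallback, and does nothing else.

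Your primary route differs only in Step~2: instead of quoting the general transfer $E(\widetilde{\mathcal{M}})^{\times}=E^{\times}(\widetilde{\mathcal{M}})$, you compute $L_{\varphi}(\widetilde{\mathcal{M}},\tau)^{\times}$ by hand. The norm identification and the inclusion $L_{\psi}\subseteq L_{\varphi}^{\times}$ are straightforward, as you say. The reverse inclusion via the ``alignment'' construction is genuine work --- it is essentially the content of the Dodds--Dodds--de~Pagter transfer theorem specialised to Orlicz spaces --- and your description of the spectral difficulty (plateaus and jumps in $\mu_{(\cdot)}(|y|)$) is accurate. What your direct argument buys is self-containment and insight into why the noncommutative K\"othe dual behaves like the commutative one; what the paper's citation buys is brevity. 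Since the paper is content to quote \cite{Dodds}, your fallback is already at parity with it, and your main route would be strictly more informative if carried out.
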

\begin{proof}
Theorem 5.11 combined with Theorem 5.6 of \cite{Dodds} show that for a rearrangement invariant symmetric Banach function space $E(\mathcal{M})$, if it is order continuous, then Banach dual $E(\mathcal{M})^{\ast}$ may be identified with the space $E^{\times}(\mathcal{M})$ if $E(\mathcal{M})$. Hence, by Theorem 2.3, we can get the conclusion.
\end{proof}
Similar to the classic case, using Theorem 2.4 we can get
\begin{corollary} 
$L_{\varphi}(\widetilde{\mathcal{M}},\tau)$ is reflexive if and only if both $\varphi\in \Delta_{2}$ and $\psi\in \Delta_{2}$.
\end{corollary}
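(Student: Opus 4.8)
### Plan

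The plan is to deduce reflexivity of $L_{\varphi}(\widetilde{\mathcal{M}},\tau)$ from the duality formula of Theorem~2.4 applied twice, in the spirit of the classical Orlicz-space argument. First I would observe that by Theorem~2.4, if $\varphi\in\Delta_{2}$ then
$$L_{\varphi}(\widetilde{\mathcal{M}},\tau)^{\ast}=L_{\psi}^{o}(\widetilde{\mathcal{M}},\tau),$$
and if in addition $\psi\in\Delta_{2}$ then applying Theorem~2.4 again (with the roles of $\varphi$ and $\psi$ interchanged, noting that $\varphi$ is the complementary function of $\psi$ by the involutive nature of the Young conjugate) gives
$$L_{\psi}(\widetilde{\mathcal{M}},\tau)^{\ast}=L_{\varphi}^{o}(\widetilde{\mathcal{M}},\tau).$$
Combining these yields $L_{\varphi}(\widetilde{\mathcal{M}},\tau)^{\ast\ast}=L_{\varphi}(\widetilde{\mathcal{M}},\tau)$ once one checks that the Luxemburg norm $\|\cdot\|$ and the Orlicz norm $\|\cdot\|^{o}$ are equivalent on the space (which holds precisely under $\Delta_{2}$), so that the two descriptions of the bidual agree as Banach spaces and the canonical embedding is onto. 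This gives the "if" direction.

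For the "only if" direction I would argue by contraposition: suppose $\varphi\notin\Delta_{2}$. Then by Theorem~2.1, $L_{\varphi}(\widetilde{\mathcal{M}},\tau)$ fails the Kadec-Klee property in measure; more directly, when $\varphi\notin\Delta_{2}$ the space $L_{\varphi}(\widetilde{\mathcal{M}},\tau)$ is not order continuous (the standard obstruction: the closure of the simple operators is a proper subspace), hence by Theorem~2.3 and the characterization of order continuity it cannot be reflexive, since a reflexive symmetric space is necessarily order continuous (reflexive Banach function spaces contain no copy of $c_{0}$ or $\ell^{\infty}$). Symmetrically, if $\psi\notin\Delta_{2}$, then $L_{\psi}(\widetilde{\mathcal{M}},\tau)$ is not order continuous; since a Banach space is reflexive if and only if its dual is, and $L_{\psi}^{o}(\widetilde{\mathcal{M}},\tau)$ is the dual of $L_{\varphi}(\widetilde{\mathcal{M}},\tau)$ (again modulo the norm-equivalence subtlety), the failure of reflexivity of the dual forces the failure of reflexivity of $L_{\varphi}(\widetilde{\mathcal{M}},\tau)$.

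The main obstacle I anticipate is the "only if" direction in the noncommutative setting: one needs a clean statement that $\varphi\notin\Delta_{2}$ implies $L_{\varphi}(\widetilde{\mathcal{M}},\tau)$ contains an isomorphic copy of $\ell^{\infty}$ (or $c_{0}$), which can be extracted from the construction in the proof of Theorem~2.1 using the mutually orthogonal projections $\{e_{k}\}$ and the failure of the $\Delta_{2}$ estimate; alternatively one invokes the general theory of symmetric operator spaces (e.g. Dodds--Dodds--de Pagter) to the effect that a symmetric space is reflexive iff it is order continuous and has order continuous Köthe dual, and both conditions translate to $\varphi,\psi\in\Delta_{2}$. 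I would lean on the results of \cite{Dodds} cited in Theorem~2.4 together with Theorem~2.3 to package this, so that the proof reduces to: order continuity of $L_{\varphi}$ $\iff\varphi\in\Delta_{2}$ (Theorems 2.1 and 2.3), order continuity of the Köthe dual $L_{\psi}^{o}$ $\iff\psi\in\Delta_{2}$ (same applied to $\psi$), and reflexivity $\iff$ both hold (Dodds--Dodds--de Pagter). The routine parts are the bookkeeping of complementary functions and the norm equivalence under $\Delta_{2}$; the conceptual step is the reflexivity criterion for symmetric operator spaces.
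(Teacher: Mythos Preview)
Your approach is essentially the same as the paper's: the paper simply writes ``Similar to the classic case, using Theorem 2.4 we can get'' and states the corollary without further argument. Your plan---apply Theorem~2.4 twice for the ``if'' direction and use an order-continuity obstruction for the ``only if'' direction---is exactly the classical argument the paper has in mind, and you have correctly identified that the nontrivial content lies in the converse direction.

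Two small corrections to your write-up. First, the equivalence of the Luxemburg norm $\|\cdot\|$ and the Orlicz norm $\|\cdot\|^{o}$ on $L_{\varphi}(\widetilde{\mathcal{M}},\tau)$ holds in general (with constants $1$ and $2$), not only under $\Delta_{2}$; so no extra hypothesis is needed to pass between the two descriptions of the dual. Second, for the ``only if'' direction you invoke that $\varphi\notin\Delta_{2}$ implies $L_{\varphi}(\widetilde{\mathcal{M}},\tau)$ is not order continuous, and you correctly note that Theorem~2.3 of the paper only states the forward implication. Your instinct to extract the converse from the mutually orthogonal projections in the proof of Theorem~2.1 (producing a copy of $\ell^{\infty}$, hence of $c_{0}$, hence failure of reflexivity) is the right way to close this; alternatively the general reflexivity criterion for symmetric operator spaces in Dodds--Dodds--de Pagter \cite{Dodds} handles both directions at once, and this is consistent with the paper's reliance on \cite{Dodds} in Theorem~2.4.
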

It easy to know that if $\varphi(x)=|x|^{p}\,\, (1<p<\infty)$, then $\psi(x)=|x|^{q}$ is complementary function of $\varphi$, where $\frac{1}{p}+\frac{1}{q}=1$.
 Hence, as an special example of Theorem 2.4, one have

$(1)$ $L_{p}(\widetilde{\mathcal{M}},\tau)^{\ast}=L_{q}(\widetilde{\mathcal{M}},\tau)$, where $1<p<\infty$ and $\frac{1}{p}+\frac{1}{q}=1$;

$(2)$ $L_{p}(\widetilde{\mathcal{M}},\tau)$ is reflexive when  $1<p<\infty$.

Especially, if $p=1$ then $L_{1}(\widetilde{\mathcal{M}},\tau)^{\ast}=L_{\infty}(\widetilde{\mathcal{M}},\tau)=\mathcal{M}$, but $L_{1}(\widetilde{\mathcal{M}},\tau)$ is nonreflexive since $\mathcal{M}^{\ast}\neq L_{1}(\widetilde{\mathcal{M}},\tau)$.
\section*{References}

\bibliography{mybibfile}

\end{spacing}
\end{document}